\renewcommand\thesection{\arabic{section}}
\renewcommand\thesubsection{\thesection.\arabic{subsection}}
\newcommand{\vectornorm}[1]{\left|\left|#1\right|\right|}
\newcommand{\rr}[0]{\mathbb{R}}
\newcommand{\zz}[0]{\mathbb{Z}}
\newcommand{\nn}[0]{\mathbb{N}}
\DeclareMathOperator{\inter}{int}
\DeclareMathOperator{\dom}{dom}
\newtheoremstyle{mytheoremstyle} % name
    {\topsep}                    % Space above
    {\topsep}                    % Space below
    {\itshape}                   % Body font
    {}                           % Indent amount
    {\scshape}                   % Theorem head font
    {.}                          % Punctuation after theorem head
    {.5em}                       % Space after theorem head
    {}  % Theorem head spec (can be left empty, meaning ‘normal’)
\theoremstyle{mytheoremstyle}
\newtheorem{thm}{Theorem}
\newtheorem{cor}[thm]{Corollary}
\newtheorem{lem}[thm]{Lemma}
\newtheorem{defn}[thm]{Definition}
\begin{document}

% ----------------------------- document -----------------------------------

\author{Aleksander Czechowski$^1$ and Piotr Zgliczy\'nski$^1$ \\ 
  {\small \rm $^1$Institute of Computer Science and Computational Mathematics\\ Jagiellonian University\\ \L ojasiewicza 6, Krak\'ow, 30-348 Poland\\
    e-mail: {\it czechows@ii.uj.edu.pl, zgliczyn@ii.uj.edu.pl} } }

\renewcommand*{\thefootnote}{\fnsymbol{footnote}}

\title{Rigorous numerics for PDEs with indefinite tail: \linebreak
  existence of a periodic solution of the Boussinesq equation with time-dependent forcing\footnote{AC was supported by the Foundation for Polish Science under the MPD Programme 
    ‘‘Geometry and Topology in Physical Models’’,
  co-financed by the EU European Regional Development Fund, Operational Program Innovative Economy 2007-2013. PZ was supported by Polish
National Science Centre grant 2011/03B/ST1/04780.}}

\maketitle
\renewcommand*{\thefootnote}{\arabic{footnote}}

\setcounter{footnote}{0}

\abstract{We consider the Boussinesq PDE perturbed by a time-dependent forcing.
Even though there is no smoothing effect for arbitrary smooth initial data,
we are able to apply the method of self-consistent bounds to deduce the existence of smooth classical periodic solutions 
in the vicinity of 0.
The proof is non-perturbative and relies on construction of periodic isolating segments
in the Galerkin projections.}

\keywords{Boussinesq equation, ill-posed PDEs, periodic solutions, isolating segments.}

%\textrm{\textbf{2010 AMS subject classifications.} 35A16, 35B10, 65G30}

\section{Introduction}

In recent years pioneering contributions have been made
in the field of rigorous computer-assisted results for dynamics of dissipative PDEs~\cite{ZgliczynskiMischaikow, Zgliczynski2, Zgliczynski3, Zgliczynski4, ZAKS,
Cyranka, CyrankaZgliczynski, ArioliKoch1, ArioliKoch2, Day, Gameiro}.
The methods exploit the smoothing property of the system to apply either topological
or functional-analytic tools.
However, little attention has been paid to apply these methods to other types of evolution PDEs, such as the
ones with tail of saddle type. In such problems we need to deal with an infinite number of strongly repelling and strongly attracting directions.
From the point of view of topological methods this situation is just as good as the dissipative one
-- for example Theorem~2.3 in~\cite{Zgliczynski4} is formulated in a way that is readily applicable
to finding equilibria of these systems.
Our goal in this paper is to take this approach one step further and use 
a topological tool of periodic isolating segments to prove the existence of periodic in time solutions
in a nonautonomously perturbed equation of such type.

Our example will be the Boussinesq equation~\cite{Boussinesq} but without much difficulty the methods can be applied
to produce similar results in other systems with an indefinite tail of saddle type.

\subsection{The forced Boussinesq equation}

We consider the following second order nonlinear equation perturbed by a time-dependent forcing term:
\begin{equation}\label{eq:bsq}
  u_{tt} = u_{xx} + \beta u_{xxxx} + \sigma (u^{2})_{xx} + \epsilon f(t,x)\,.
\end{equation}
On $u$ and $f$ we impose periodic and even boundary conditions and a zero-average condition in $x$:
\begin{align}
  u(t,x+2\pi) &= u(t,x)\,,\label{eq:per} \\
  u(t,-x) &= u(t,x)\,,\label{eq:ev} \\
  \int_0^{2\pi} u(t,x) dx &= 0\,,\label{eq:av} \\
  f(t,x+2\pi) &= f(t,x)\,,\label{eq:fper} \\
  f(t,-x) &= f(t,x)\,,\label{eq:fev} \\
  \int_0^{2\pi} f(t,x) dx &= 0\,.\label{eq:fav}
\end{align}

For $\beta > 0$ the unperturbed equation
\begin{equation}\label{eq:bsqAut}
  u_{tt} = u_{xx} + \beta u_{xxxx} + \sigma (u^{2})_{xx}
\end{equation}
is the ``bad'' Boussinesq equation and was derived by Boussinesq~\cite{Boussinesq} as a model for shallow water waves.
The equation is famous for its ill-posedness.
Indeed, when looking at its linear part
\begin{equation}\label{eq:bsqLin}
  u_{tt} = u_{xx} + \beta u_{xxxx}
\end{equation}
one can observe a rapid growth in high Fourier modes for almost all initial data,
hence a consequent loss of regularity of the solution.
This is a significant complication in the numerical analysis of~\eqref{eq:bsqAut},
since slightest perturbations of the initial problem can produce a totally different behaviour at output.
Because of that, regularized versions of the equation were considered in numerical studies~\cite{Manoranjan}.
Solutions to the equation~\eqref{eq:bsqAut} were also obtained analytically~\cite{Hirota} and by the inverse scattering method~\cite{Zakharov}.
Our approach is different; we analyze the direction of the vector field on certain subsets of the phase space,
and by a topological method we deduce the existence of smooth, periodic solutions.

Here is an example result illustrating our method.
We define families of functions
\begin{equation}
  \begin{aligned}
  \mathcal{F}_{\tau}^{A} = \{ f:\ f(t,x) = 2 f_1(t) \cos x,\\ f_1 \text{ continuous and $\tau$-periodic}, \ |f_1(t)| \leq 1 \ \forall t \}\,, \\
  \mathcal{F}_{\tau}^{B} = \{ f:\ f(t,x) =  2 \sum_{k=1}^{4} f_k(t) \cos kx,\\ f_k \text{ continuous and $\tau$-periodic}, \ |f_k(t)| \leq 1 \ \forall k,t \}\,.
\end{aligned}
\end{equation}
Then the following theorem holds.
\begin{thm}\label{thm:main}
  For $\sigma=3$, for all $\tau>0$ and all $f(t,x) \in \mathcal{F}_{\tau}^{A} \cup \mathcal{F}_{\tau}^{B}$,
  and for values of $\beta$ and $\epsilon$ given in Tables~\ref{tab1} and~\ref{tab2} there exists a classical $\tau$-periodic in time solution to~\eqref{eq:bsq},
  subject to conditions~\eqref{eq:per}, \eqref{eq:ev} and~\eqref{eq:av}. The solution exists in the vicinity of 0 and the
  bounds on its $L^2$ and $C^0$ norms and the norms of its time derivative are given in Tables~\ref{tab1} and~\ref{tab2}.\footnote{By bounds on $L^2$ and $C^0$ norms of a function $u=u(t,x)$
    we mean upper bounds on $\sup_{t \in \rr} \vectornorm{u(t,\cdot)}_{L^2}$ and $\sup_{t \in \rr} \vectornorm{u(t,\cdot)}_{C^0}$, respectively.}
    The solution and its time derivative are $C^4$ and $C^2$ smooth in $x$, respectively.
\end{thm}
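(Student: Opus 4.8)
The plan is to work in Fourier space, expanding $u(t,x) = 2\sum_{k\geq 1} u_k(t)\cos kx$ so that the boundary conditions are automatically satisfied, and rewrite the second-order PDE as a first-order system by setting $v = u_t$. This converts the Boussinesq equation into an infinite system of ODEs for the Fourier coefficients $(u_k, v_k)$. The linear part is diagonal in $k$: each mode $k$ contributes eigenvalues $\pm\lambda_k$ with $\lambda_k^2 = \beta k^4 - k^2$, so for $\beta k^2 > 1$ (all but finitely many $k$) the mode is of saddle type, with one strongly repelling and one strongly attracting direction. The quadratic term $\sigma(u^2)_{xx}$ becomes a convolution in Fourier modes, and the forcing contributes only to the low modes in both families $\mathcal{F}_\tau^A$ and $\mathcal{F}_\tau^B$.

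Let me sketch the topological machinery...

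The core of the argument is to construct a periodic isolating segment $W \subset [0,\tau]\times X$ over the time circle for this nonautonomous system, where $X$ is a suitable phase space of decaying Fourier sequences. Following the framework referenced as Theorem~2.3 in~\cite{Zgliczynski4} and the method of self-consistent bounds, I would split the coordinates into a finite Galerkin projection (the low modes $k\leq m$ for some explicit dimension $m$) and the infinite tail $k>m$. On the tail, where every mode is a genuine saddle, the self-consistent bounds provide an a priori rectangular trapping region whose faces are crossed transversally by the vector field: the repelling directions exit and the attracting directions enter, giving the product structure of an isolating segment in the tail with a well-defined exit set. First I would verify these tail estimates rigorously, bounding the convolution nonlinearity on the candidate set so that the linear saddle dynamics dominates.

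On the finite-dimensional Galerkin part I would build an explicit polytope (a product of intervals in the $(u_k,v_k)$ coordinates) and check, by rigorous interval-arithmetic evaluation of the vector field on each face, that it forms a periodic isolating segment whose exit set has the homotopy type forcing a nonzero fixed-point index for the associated Poincar\'e map; this is exactly what the entries of Tables~\ref{tab1} and~\ref{tab2} encode. The segment is taken $t$-independent (a genuine block rather than a moving one) because the forcing amplitude $\epsilon$ is small and the families $\mathcal{F}_\tau^A,\mathcal{F}_\tau^B$ are uniformly bounded, so the same polytope works for every admissible $f$ and every period $\tau$. Combining the finite and infinite parts, the segment $W$ yields, via the Ważewski-type / isolating-segment index theorem, a $\tau$-periodic solution of the full infinite system lying inside $W$, hence in the vicinity of $0$.

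The final step is to promote this $\tau$-periodic solution of the infinite ODE system to a classical solution of the PDE. Here I would use the decay rates built into the self-consistent bounds: the tail estimates force $|u_k(t)|$ to decay fast enough in $k$ that the Fourier series and its term-by-term $x$-derivatives up to order four converge uniformly, and similarly $v_k=\dot u_k$ gives $C^2$ regularity in $x$ for the time derivative, matching the smoothness claims. The main obstacle, and the genuinely delicate part, is establishing that the tail self-consistent bounds are \emph{compatible} with the saddle geometry across the whole period: one must show the nonlinear convolution coupling from the low modes into the high modes cannot overwhelm the linear repulsion/attraction on the isolating faces, uniformly in $t$ and in the choice of $f$. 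Getting these constants to close — simultaneously validating the finite-dimensional index computation and the infinite tail bounds with the \emph{same} region — is where the rigorous numerics in Tables~\ref{tab1} and~\ref{tab2} carry the real weight of the proof.
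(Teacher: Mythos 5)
Your proposal assembles most of the right ingredients (Fourier expansion, saddle splitting of each mode, tail bounds where the linear part dominates the convolution, interval arithmetic on the low modes, regularity from coefficient decay), but it has a genuine gap at the central topological step. You propose to build one isolating segment $W\subset[0,\tau]\times X$ in the \emph{infinite-dimensional} phase space and then invoke ``the Wa\.zewski-type / isolating-segment index theorem'' to produce a $\tau$-periodic solution of the full infinite system, with a ``fixed-point index for the associated Poincar\'e map.'' This step fails: the forced Boussinesq equation is ill-posed, so there is no local semiflow on $X$ and no Poincar\'e map whose index one could compute --- initial data in $W$ need not generate any solution at all, even for short time. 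Srzednicki's theorem (Theorem~\ref{thm:srzednicki}) is a finite-dimensional result for $C^1$ ODEs, and there is no off-the-shelf infinite-dimensional version applicable here precisely because of the ill-posedness. The paper's proof circumvents this in an essential way that your proposal omits: Corollary~\ref{cor:isegment} is applied separately to \emph{each} Galerkin projection~\eqref{eq:diagP} (these are honest finite-dimensional ODEs, where the theorem is valid), yielding for every $n>M$ a $\tau$-periodic solution confined to the same product bounds; then the compactness result for self-consistent bounds (Theorem~\ref{thm:conv}) extracts a subsequence converging, uniformly on compact time intervals, to a solution of the full PDE, and periodicity survives the limit. This Galerkin-then-limit structure is not a technical nicety but the mechanism that makes a topological argument possible for an ill-posed equation; without it your argument has no theorem to invoke.

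A second, smaller issue: you take the low-mode segment to be a product of intervals in the $(u_k,v_k)$ coordinates. For a saddle block whose eigenvectors are not axis-aligned this cannot satisfy the isolation conditions~\eqref{eq:isolation}: on a face $u_k=\mathrm{const}$ the normal component of the vector field is $\dot u_k=v_k$, which changes sign along the face, so the face is neither entirely exit nor entirely entrance. The paper first diagonalizes each $2\times 2$ block via~\eqref{eq:changeVar}, producing coordinates $(u_k^+,u_k^-)$ in which rectangles do work; your segments (for all modes, not just the tail) must be built in these eigencoordinates. Finally, note that the segment data is what Table~\ref{tab3} records; Tables~\ref{tab1} and~\ref{tab2} only report the resulting norm bounds after transforming back through~\eqref{eq:changeVar}.
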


\begin{table}
  \begin{center}
    \caption{Bounds on $\epsilon$ and on the norms of periodic solutions and their time derivatives for $f \in \mathcal{F}_{\tau}^{A}$}
    \scalebox{0.97}{\begin{tabular}{ | c | c | c | c | c | c |}
        \hline
          $\beta$ & $\epsilon$ & $\vectornorm{u(t,\cdot)}_{L^2} \leq$ & $\vectornorm{u(t,\cdot)}_{C^0} \leq$ &
          $\vectornorm{u_t(t,\cdot)}_{L^2} \leq$ &  $\vectornorm{u_t(t,\cdot)}_{C^0} \leq$  \\ \hline
          $1.5$ & $[-0.05,0.05]$ & $0.28862115$ & $0.12440683$ & $0.24610779$ & $0.2143523$ \\ \hline
          $1.75$ & $[-0.1,0.1]$ & $0.41504192$ & $0.1820825$ & $0.39340084$ & $0.42461205$ \\ \hline
          $2.5$ & $[-0.3,0.3]$ & $0.84724825$ & $0.38676747$ & $0.96839709$ & $1.4795576$ \\ \hline
        \end{tabular}}
    \label{tab1}
  \end{center}
\end{table}
\begin{table}
  \begin{center}
    \caption{Bounds on $\epsilon$ and on the norms of periodic solutions and their time derivatives for $f \in \mathcal{F}_{\tau}^{B}$}
    \scalebox{0.97}{\begin{tabular}{ | c | c | c | c | c | c |}
        \hline
          $\beta$ & $\epsilon$ & $\vectornorm{u(t,\cdot)}_{L^2} \leq$ & $\vectornorm{u(t,\cdot)}_{C^0} \leq$ &
          $\vectornorm{u_t(t,\cdot)}_{L^2} \leq$ &  $\vectornorm{u_t(t,\cdot)}_{C^0} \leq$  \\ \hline
          $1.5$ & $[-0.05,0.05]$ & $0.29831987$ & $0.13194161$ & $0.25703095$ & $0.24099758$ \\ \hline
          $1.75$ & $[-0.1,0.1]$ & $0.43198386$ & $0.19524766$ & $0.41478653$ & $0.47720834$ \\ \hline
          $2.5$ & $[-0.3,0.3]$ & $0.88825406$ & $0.41784158$ & $1.0309512$ & $1.637095$ \\ \hline
        \end{tabular}}
    \label{tab2}
  \end{center}
\end{table}

Observe, that $0$ is a constant in time solution of the unperturbed system~\eqref{eq:bsqAut},
hence the requested $\tau$-periodic solution for $\varepsilon=0$. Nevertheless, the method is not perturbative.
We consider a perturbation problem only because it gives a convenient approximation of the periodic solution for $|\epsilon| \neq 0$ small.

The proof is computer-assisted, that means certain inequalities contained in it are verified rigorously by a computer program in interval arithmetics.
The program source code is available at~\cite{Czechowski}.
From Table~\ref{tab3} and equation~\eqref{eq:changeVar} one can also extract the exact bounds on the Fourier coefficients of the solutions, which we do not give here.
By following the steps of the proof it will become clear that we can easily produce results of the same type
for:
\begin{itemize}
  \item any parameters $\sigma \in \rr$ and $\beta > 1$,
  \item any given smoothness $s>5$,
  \item any forcing of the form $f(t,x) = \sum_{k=1}^{n} f_k (t) \cos kx$ where each $f_k$ is continuous and $\tau$-periodic.
\end{itemize}
The periodic solution will
exist for $\varepsilon \in [-\varepsilon_0,\varepsilon_0]$, $\varepsilon_0$ small enough, and we can attempt to verify an explicit range and obtain a bound for the norm
with help of the program.

Let us finish this section with listing some generalizations.
Most of them can be adapted from the previous treatment of dissipative PDEs~\cite{ZgliczynskiMischaikow, Cyranka, CyrankaZgliczynski}
and several would require only little effort to be introduced in this paper.
However, we consider this exposition as a preview of the method. We tried to focus on the key matter,
which is how to deal with the linear instability of the high Fourier modes in a simplest scenario.

\begin{enumerate}[label={\arabic*.},leftmargin=*]
  \item We could take $\beta$ from the range $(0,1]$. Then, the linearized equation~\ref{eq:bsqLin} possesses purely imaginary eigenvalues
  in its low modes. This would involve conducting a finite-dimensional analysis of the higher order terms of the low modes.

  \item We could allow non-zero averages, non-even functions or periodic solutions obtained
  in the proximity of non-zero equilibria of the unperturbed system.

  \item For the forcing term it would have been enough to assume a sufficiently fast decay in the high Fourier terms.

  \item We could consider non-periodic forcings and attempt to prove existence of (not necessarily periodic) solutions that exist for all $t \in \rr$,
  by techniques from~\cite{CyrankaZgliczynski}.
\end{enumerate}
The apparently difficult problems are
\begin{itemize}
  \item proving the existence of periodic solutions which are not obtained as perturbations of stationary points,
  \item proving dynamics more complicated than a periodic orbit (e.g. chaotic dynamics),
  \item proving the existence of periodic orbits in autonomous ill-posed systems.
\end{itemize}
We think that to efficiently treat these cases within the framework of self-consistent bounds
we would need a rigorous integration procedure, akin to the rigorous integration of dissipative PDEs~\cite{Zgliczynski2, Zgliczynski3, Cyranka}.
Obviously, the ill-posedness is a significant issue
and it seems that the integration should be combined with an automatic segment placement in the expanding coordinates.
We are currently looking into the feasability of this approach.

This paper is organized as follows. In Section~\ref{sec:bounds} we invoke the general method of self-consistent bounds
and a result which states that a sequence of solutions for the Galerkin projections converges to a solution of the PDE.
In Section~\ref{sec:segments} we present a result of Srzednicki~\cite{Srzednicki} stating the conditions under which
non-autonomous time-periodic ordinary differential equations -- in our case the Galerkin projections -- have periodic solutions.
In Section~\ref{sec:boussinesq} we apply these tools to the Boussinesq equation~\eqref{eq:bsq}
and prove Theorem~\ref{thm:main}.

\section{The general method of self-consistent bounds}\label{sec:bounds}

In this section we recall the general method of self-consistent bounds, as introduced in the series
of papers~\cite{ZgliczynskiMischaikow, Zgliczynski2, Zgliczynski3}. 
We follow the exposition given in~\cite{CyrankaZgliczynski} for time-dependent systems.

Let $J \subset \rr$ be a (possibly unbounded) interval. We consider a nonautonomous
evolution equation on a real Hilbert space $H$ ($L^2$ in our case) of the form
\begin{equation}\label{eq:abstractPDE}
  \frac{da}{dt} = F(t,a)\,.
\end{equation}

We assume that the set of $x$ such that $F(t,x)$ is defined for every $t \in J$ is dense in $H$ and we
denote it by $\tilde{H}$. By a solution of~\eqref{eq:abstractPDE} we mean a function
$a:J' \to \tilde{H}$, such that $J'$ is a subinterval of $J$, $u$ is differentiable and~\eqref{eq:abstractPDE} is satisfied for all $t \in J'$.

Let $I \subset \zz^d$ and let $H_{k} \subset H$ be a sequence of subspaces with $\dim H_{k} \leq d_1 < \infty$, such that
\begin{equation}\label{eq:prod}
  H = \overline{ \bigoplus_{k \in I} H_k }
\end{equation}
and $H_{k}$'s are pairwise orthogonal. We will denote the orthogonal projection onto $H_k$ by $A_k$ and 
write
\begin{equation}
  a_k := A_k a\,.
\end{equation}
From~\eqref{eq:prod} it follows that $a= \sum_{k \in I} a_k$.

From now on we will fix some (arbitrary) norm $|\cdot|$ on $\zz^d$.
For $n>0$ we set
\begin{equation}
  \begin{aligned}
    X_n &:= \bigoplus_{k \in I,|k|\leq n} H_k\,, \\
    Y_n &:= X_n^{\perp}\,.
  \end{aligned}
\end{equation}
We will denote the orthogonal projections onto $X_n$ and $Y_n$ by $P_n : H \to X_n$ and $Q_n: H \to Y_n$.

\begin{defn}
  Let $J \subset \rr$ be an interval. We say that $F: J \times H \supset \dom{F} \to H$ is admissible, 
  if the following conditions hold for each $i \in \zz^d$ such that $\dim X_i > 0$:
  \begin{itemize}
    \item $J \times X_i \subset \dom{F}$,
    \item $(P_i \circ F)|_{J \times X_i}: J \times X_i \to X_i$ is a $C^1$ function.
  \end{itemize}
\end{defn}

\begin{defn}
  Let $F: J \times \tilde{H} \to H$ be admissible. 
  The ordinary differential equation
  \begin{equation}
    \frac{dp}{dt} = P_n F(t,p), \quad p \in X_n\,,
  \end{equation}
  will be called the $n$-th Galerkin projection of~\eqref{eq:abstractPDE}.
\end{defn}

\begin{defn}
  Assume $F: J \times \tilde{H} \to H$ is an admissible function. Let $m,M \in \nn$ with $m \leq M$.
  A compact set consisting of $W \subset X_m$ and a sequence of compact sets $\{ B_k \}_{k \in I,|k|>m}$ such that $B_k \subset H_k$ 
  form self-consistent bounds if the following conditions are satisfied:
  \begin{itemize}
    \item[\textbf{C1}] For $|k|>M$, $k \in I$ it holds that $0 \in B_k$,
    \item[\textbf{C2}] Let $|k|>m$ and $\hat{a}_k := \max_{a \in B_k} \vectornorm{a}$. Then $\sum_{|k|>m, k \in I} \hat{a}_k^2 < \infty$.
      In particular we have
      \begin{equation}
         W \oplus \prod_{|k|>m} B_k \subset H\,.
      \end{equation}
    \item[\textbf{C3}] The function $(t,u) \to F(t,u)$ is continuous on $J \times W \oplus \prod_{|k|>m} B_k \subset \rr \times H$.
      Moreover, if we define $\hat{f}_k := \sup_{(t,u) \in J \times W \oplus \prod_{k \in I, |k|>m} B_k} \vectornorm{A_k F(t,u)}$, then $\sum_{|k|>m, k \in I} \hat{f}_k^2 < \infty$.
  \end{itemize}
\end{defn}

Given self-consistent bounds formed by $W$ and $\{B_k\}_{k \in I, |k| > m}$, by $T$ (\emph{the tail}) we will denote the set
\begin{equation}
  T:= \prod_{k \in I, |k| > m} B_k \subset Y_m\,.
\end{equation}

The following theorem is a straightforward adaptation of Lemma 5 from~\cite{Zgliczynski2} (see also Section 4 in~\cite{ZgliczynskiNS}) to a nonautonomous setting.

\begin{thm}\label{thm:conv}
  Let $W$ and $\{B_k\}_{k \in I, |k|>m}$ form self-consistent bounds and let \linebreak $\{n_k\}_{k \in \nn} \subset \nn$ be a sequence,
  such that $\lim_{k \to \infty} n_k = \infty$. Assume that for all $k>0$ there 
  exists a solution $x_k: [t_1,t_2] \to W \oplus T$ of
  \begin{equation}
    \frac{dp}{dt} = P_{n_k}\left(F (t,p(t))\right), \quad p(t) \in X_{n_k}\,. 
  \end{equation}
  Then there exists a convergent subsequence $\lim_{l \to \infty} p_{k_l} = p^*$, where $p^*: [t_1, t_2] \to W \oplus T$ is a solution of~\eqref{eq:abstractPDE}.
  Moreover, the convergence is uniform with respect to $t$ on $[t_1, t_2]$.
\end{thm}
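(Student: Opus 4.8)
The plan is to run a standard Arzelà--Ascoli compactness argument in the Hilbert space $H$, using conditions \textbf{C1}--\textbf{C3} to supply the uniform estimates that both produce a convergent subsequence and let us identify its limit as a solution of~\eqref{eq:abstractPDE}. Throughout I would write $K := W \oplus T$ and denote the given Galerkin solutions by $p_k$ (written $x_k$ in the statement). The first observation is that $K$ is compact in $H$: each $B_k$ is compact, and by \textbf{C2} the summability $\sum_{|k|>m}\hat{a}_k^2 < \infty$ makes the tail $T=\prod_{|k|>m}B_k$ totally bounded (the contribution of modes beyond any level $N$ is uniformly bounded by $\sum_{|k|>N}\hat a_k^2$) and closed, hence compact; together with compactness of $W$ this gives compactness of $K$. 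Since every $p_k$ takes values in $K$, the family $\{p_k\}$ is uniformly bounded and pointwise relatively compact.

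Next I would bound the derivatives to get equicontinuity. By \textbf{C3} the field $F$ is continuous on $J\times K$, hence bounded on the compact set $[t_1,t_2]\times K$ by some constant $C$; since $\vectornorm{P_{n_k}}\leq 1$ this yields
\[
  \vectornorm{\tfrac{dp_k}{dt}(t)} = \vectornorm{P_{n_k}F(t,p_k(t))} \leq \vectornorm{F(t,p_k(t))} \leq C
\]
for all $k$ and all $t\in[t_1,t_2]$. Thus the $p_k$ are uniformly $C$-Lipschitz, and Arzelà--Ascoli (for maps of a compact interval into the compact set $K$) furnishes a subsequence $p_{k_l}\to p^*$ converging uniformly on $[t_1,t_2]$, with $p^*:[t_1,t_2]\to K$ continuous.

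It then remains to identify $p^*$ as a solution. I would pass to the limit in the integrated form
\[
  p_{k_l}(t) = p_{k_l}(t_1) + \int_{t_1}^{t} P_{n_{k_l}}F(s,p_{k_l}(s))\,ds\,,
\]
for which the key is uniform convergence of the integrands to $F(s,p^*(s))$. Splitting
\[
  P_{n_{k_l}}F(s,p_{k_l}(s)) - F(s,p^*(s)) = P_{n_{k_l}}\!\left(F(s,p_{k_l}(s))-F(s,p^*(s))\right) - Q_{n_{k_l}}F(s,p^*(s))\,,
\]
the first term is dominated by $\vectornorm{F(s,p_{k_l}(s))-F(s,p^*(s))}$, which tends to $0$ uniformly in $s$ by uniform continuity of $F$ on the compact $[t_1,t_2]\times K$ together with the uniform convergence $p_{k_l}\to p^*$.

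The hard part, and the step that genuinely uses the infinite-dimensional structure rather than finite-dimensional ODE theory, is the tail term $Q_{n_{k_l}}F(s,p^*(s))$. Here I would invoke \textbf{C3} directly, estimating
\[
  \vectornorm{Q_{n_{k_l}}F(s,p^*(s))}^2 = \sum_{|k|>n_{k_l}} \vectornorm{A_k F(s,p^*(s))}^2 \leq \sum_{|k|>n_{k_l}} \hat{f}_k^2\,,
\]
where the right-hand side is independent of $s$ and tends to $0$ because $\sum_{|k|>m}\hat{f}_k^2<\infty$ and $n_{k_l}\to\infty$. Hence the integrands converge uniformly on $[t_1,t_2]$, and letting $l\to\infty$ gives $p^*(t)=p^*(t_1)+\int_{t_1}^{t}F(s,p^*(s))\,ds$. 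Since $s\mapsto F(s,p^*(s))$ is continuous and $p^*(t)\in K\subset\tilde{H}$, the fundamental theorem of calculus shows $p^*$ is differentiable with $\frac{dp^*}{dt}=F(t,p^*(t))$, so $p^*$ is a solution of~\eqref{eq:abstractPDE} valued in $W\oplus T$, and the convergence is uniform in $t$ as claimed.
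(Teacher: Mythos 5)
Your proof is correct and takes essentially the same route as the paper's source for this result: the paper does not prove Theorem~\ref{thm:conv} itself but cites Lemma~5 of~\cite{Zgliczynski2}, whose argument is exactly this Arzel\`a--Ascoli scheme (compactness of $W \oplus T$ from \textbf{C1}--\textbf{C2}, uniform derivative bounds and equicontinuity from \textbf{C3}, passage to the integral equation, and control of the tail term $Q_{n_{k_l}}F$ by $\sum_{|k|>n_{k_l}} \hat{f}_k^2 \to 0$). No gaps to report.
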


It turns out that it is fairly simple to find self-consistent bounds.
In the treatment of evolution PDEs such as Kuramoto-Sivashinsky or Navier-Stokes 
it is enough to take tails of the form $B_k=\{ a \in H_k: \vectornorm{a} \leq C/|k|^s \}$ for $s$ large enough.
This will also be the case in our study of the Boussinesq equation.

\section{Periodic isolating segments}\label{sec:segments}

The purpose of this section is to recall a result of Srzednicki~\cite{Srzednicki} on the existence of periodic orbits
in non-autonomous time-periodic ODEs. For the Boussinesq equation this theorem will be used to treat each of the Galerkin projections of the system.

We consider an ODE
\begin{equation}\label{eq:ODE}
  \dot{x} = g(t,x)\,,
\end{equation}
where $g:\rr \times \rr^n \to \rr^n$ is of class $C^1$ and $\tau$-periodic in $t$.
Let $L^i: \rr^n \to \rr,\ i=1,\dots, k$ be $C^1$ functions and let $r \in \{ 0, \dots, k \}$ be fixed.
We define sets
\begin{equation}
  \begin{aligned}
    S_0&:= \{ x \in \rr^n: L^i (x) \leq 0 \ \forall i=1,\dots ,k \}\,, \\
    S^-_0&:= \{ x \in \rr^n: \exists i \in 1,\dots,r: L^i(x) = 0 \}\,,
  \end{aligned}
\end{equation}
and put $S:=[0,\tau] \times S_0$, $S^-:= [0,\tau] \times S^-_0$.

\begin{defn}
  We call $S$ a periodic isolating segment\footnote{The original paper~\cite{Srzednicki} uses the notion of $(p,q)$-blocks, 
  periodic isolating segments are introduced in later studies~\cite{Srzednicki2, SrzednickiWojcik} as a more general tool.} 
  over $[0,\tau]$ and $S^-$ its exit set iff the following conditions hold
  \begin{itemize}
    \item[(S1)] $g(t,x) \cdot \nabla L^i(x) > 0$ for $t \in [0,\tau]$, $i \in \{1,\dots, r\}$ and $x \in S_0: L^i(x) = 0$.
    \item[(S2)] $g(t,x) \cdot \nabla L^i(x) < 0$ for $t \in [0,\tau]$, $i \in \{r+1, \dots, k\}$ and $x \in S_0: L^i(x) = 0 $.
  \end{itemize}
\end{defn}

\begin{thm}[Theorem 2 in~\cite{Srzednicki}]\label{thm:srzednicki}
  Let $S$ be a periodic isolating segment over $[0,\tau]$ for $g$ defined as above. If $S$ and $S^-$ are compact absolute neighborhood retracts 
  and the difference of their Euler characteristics $\chi(S_0) - \chi(S_0^-)$ is non-zero,
  then there exists a point $x_0 \in \inter S_0$ such that the solution $x(t)$ of~\eqref{eq:ODE} satisfying $x(0)=x_0$ is $\tau$-periodic in $t$ and $x(t) \in \inter S_0$ for all $t \in \rr$.
\end{thm}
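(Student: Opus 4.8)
The plan is to reduce the existence of a $\tau$-periodic solution to a fixed-point problem for the period (Poincar\'e) map and then to compute the fixed-point index of that map from the topology of the pair $(S_0,S_0^-)$. Let $\varphi(t_0,\cdot,x)$ denote the local process generated by~\eqref{eq:ODE}, so that $t\mapsto\varphi(t_0,t,x)$ solves the equation with value $x$ at $t=t_0$, and set $P:=\varphi(0,\tau,\cdot)$ for the period map. Since $g$ is $\tau$-periodic in $t$, a point $x_0$ with $P(x_0)=x_0$ whose full trajectory stays in $S_0$ yields exactly the desired $\tau$-periodic solution. The first step is therefore to make sense of $P$ on the segment and to isolate its fixed points away from $\partial S_0$.

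First I would use (S1) and (S2) to establish the Wa\.zewski structure of the segment. On each exit face $\{L^i=0\}$, $i\le r$, condition (S1) says the field points strictly outward, so a trajectory meeting such a face leaves $S_0$ transversally in forward time; on each entrance face $\{L^i=0\}$, $i>r$, condition (S2) says the field points strictly inward, so trajectories can only cross these faces into $S_0$. Consequently, within the compact set $S_0$ solutions persist until they reach the exit set, the forward escape-time function is continuous where finite, and the set of points whose forward trajectory leaves $[0,\tau]\times S_0$ deformation-retracts, by pushing each point along its trajectory to its escape instant, onto $S^-$. This is precisely the input needed to invoke the Lefschetz--Wa\.zewski machinery, and it is also where the absolute-neighborhood-retract assumptions on $S$ and $S^-$ enter: they guarantee that the fixed-point index and the Lefschetz numbers below are defined and additive.

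Next I would check that $P$ has no fixed points on $\partial S_0$, so that the index $\operatorname{ind}(P,\inter S_0)$ is well defined. A fixed point on an exit face would force its (necessarily $\tau$-periodic) orbit to re-enter $S_0$ after having left it transversally, contradicting (S1); a fixed point on an entrance face is excluded symmetrically by (S2). The heart of the argument is then to evaluate this index. Using the retraction onto $S^-$ together with the fact that here the segment is the untwisted product $S=[0,\tau]\times S_0$ with time-independent exit set $S^-=[0,\tau]\times S_0^-$, the period map is homotopic, relative to the exit set, to the intrinsic monodromy of the segment, which in the product case is the identity of the pair $(S_0,S_0^-)$. Hence
\begin{equation}
  \operatorname{ind}(P,\inter S_0)=\Lambda\bigl(\id_{(S_0,S_0^-)}\bigr)=\chi(S_0)-\chi(S_0^-)\,,
\end{equation}
the Lefschetz number of the identity on a pair of compact ANRs being the relative Euler characteristic.

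Finally I would conclude: if $\chi(S_0)-\chi(S_0^-)\neq 0$ then the index is nonzero, so by the fixed-point property of the index $P$ has a fixed point $x_0\in\inter S_0$ whose trajectory remains in $S_0$ on $[0,\tau]$; periodicity extends it to all $t\in\rr$, and (S1)--(S2) upgrade ``in $S_0$'' to ``in $\inter S_0$'', since the orbit can touch neither an exit nor an entrance face. The main obstacle is the index computation in the displayed equation: one must carefully justify the homotopy from $P$ to the segment monodromy rel the exit set, and identify the resulting Lefschetz number with $\chi(S_0)-\chi(S_0^-)$, which is exactly the point at which the compact-ANR hypotheses and the Wa\.zewski retraction are indispensable.
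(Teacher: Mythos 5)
A preliminary remark: the paper itself gives no proof of this statement --- it is quoted as Theorem~2 of Srzednicki's work~\cite{Srzednicki} and used as a black box --- so your attempt has to be measured against Srzednicki's own argument. Your general strategy is indeed his: pass to the Poincar\'e map $P=\varphi(0,\tau,\cdot)$, use (S1)--(S2) to get the Wa\.zewski structure and retraction onto the exit set, identify the fixed-point index with the Lefschetz number of the monodromy of the segment, and observe that for an untwisted product segment this Lefschetz number is $\chi(S_0)-\chi(S_0^-)$.

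There is, however, a genuine gap in the middle of your argument: you compute $\operatorname{ind}(P,\inter S_0)$ and claim $P$ has no fixed points on $\partial S_0$. Both points fail. Nothing in (S1)--(S2) forbids an orbit starting at a point $x_0$ on an exit face from leaving $S_0$ immediately, wandering outside, re-entering through an entrance face, and returning to $x_0$ exactly at time $\tau$ (approaching the exit face from inside, which is perfectly compatible with the outward-pointing field there); such an $x_0$ is a fixed point of $P$ on $\partial S_0$, and your ``re-entry contradicts (S1)'' argument does not exclude it, since re-entry happens through entrance faces. Worse, even if $\operatorname{ind}(P,\inter S_0)$ were defined and nonzero, it would only yield a fixed point of $P$ lying in $\inter S_0$, i.e.\ a $\tau$-periodic solution through an interior point; the theorem requires the \emph{whole orbit} to stay in $\inter S_0$, and an interior fixed point of $P$ may correspond to an orbit that exits and re-enters $S_0$ during $(0,\tau)$. (A further technical failure: $P$ need not be defined on all of $\inter S_0$, since solutions that leave the segment may blow up before time $\tau$.) The repair, which is what Srzednicki actually does, is to compute the index of $P$ on the open set $U:=\{x\in\inter S_0:\ \varphi(0,t,x)\in\inter S_0 \ \forall t\in[0,\tau]\}$. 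On $U$ the map $P$ is defined because trajectories remain in a compact set; the set of fixed points of $P$ in $U$ is compact and disjoint from $\partial S_0$ by precisely the boundary-touching argument you use in your last paragraph; and the Wa\.zewski retraction homotopy gives $\operatorname{ind}(P,U)=\Lambda\bigl(\id_{(S_0,S_0^-)}\bigr)=\chi(S_0)-\chi(S_0^-)$, after which the conclusion follows as you state. With $\inter S_0$ replaced by $U$ throughout, your outline becomes a faithful sketch of the actual proof.
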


In what is below we denote the $i$-th coordinate of a vector $x \in \rr^n$ by $x_i$.

\begin{cor}\label{cor:isegment}
  Let $S_0 := [a_1,b_1] \times \dots \times [a_n,b_n]$ and $g=(g_1,\dots,g_n)$ be defined as above.
  Suppose that for each $i$ it holds that
  \begin{equation}\label{eq:isolation}
    g_i(t,x) g_i(t,y) < 0
  \end{equation}
  for all $x,y \in S_0: x_i = a_i,\ y_i=b_i$ and for all $t \in [0,\tau]$.
  Then there exists a $\tau$-periodic in $t$ solution of~\eqref{eq:ODE} with values contained in $\inter S_0$.
\end{cor}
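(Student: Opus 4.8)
The plan is to exhibit $S=[0,\tau]\times S_0$ as a periodic isolating segment in the sense of the preceding definition and then invoke Theorem~\ref{thm:srzednicki}. The box $S_0$ is cut out by the $k=2n$ affine functions
\[
  L^{2i-1}(x) := a_i - x_i, \qquad L^{2i}(x) := x_i - b_i, \qquad i=1,\dots,n,
\]
all of class $C^1$, with gradients $\nabla L^{2i-1}=-e_i$ and $\nabla L^{2i}=e_i$, where $e_i$ is the $i$-th standard basis vector. The first task is to extract the sign structure of $g$ on the faces of $S_0$ from the isolation hypothesis~\eqref{eq:isolation}.

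I would first show that for each $i$ the component $g_i$ is nowhere zero and of constant sign on each of the two faces $\{x\in S_0: x_i=a_i\}$ and $\{x\in S_0: x_i=b_i\}$, uniformly for $t\in[0,\tau]$, and that these two signs are opposite. Fixing $x$ on the lower face and any $y$ on the upper face, \eqref{eq:isolation} forces $g_i(t,x)\neq 0$; since the closed face times $[0,\tau]$ is connected and $g_i$ is continuous, $g_i$ keeps a constant sign $s_i^-\in\{\pm 1\}$ there, and likewise a constant sign $s_i^+$ on the upper face, with $s_i^- s_i^+=-1$ again by~\eqref{eq:isolation}. I then partition $\{1,\dots,n\}$ into the \emph{exit} coordinates $E:=\{i: s_i^-=-1\}$ (field pointing outward on both opposing faces) and the remaining \emph{entrance} coordinates, relabel the $L^j$ so that the $2|E|$ faces of exit coordinates receive the first indices, and set $r:=2|E|$. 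Conditions (S1) and (S2) then reduce to a direct computation: on $\{x_i=a_i\}$ one has $g\cdot\nabla L^{2i-1}=-g_i$, and on $\{x_i=b_i\}$ one has $g\cdot\nabla L^{2i}=g_i$; substituting the constant signs $s_i^\mp$ shows $g\cdot\nabla L^j>0$ on every exit face and $g\cdot\nabla L^j<0$ on every entrance face. This holds on the whole closed face, including edges and corners where several constraints are simultaneously active, because $g_i$ has constant sign on the entire face; so (S1)/(S2) hold verbatim.

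It remains to verify the topological hypotheses. The set $S_0$ is a compact convex box, hence a contractible compact ANR with $\chi(S_0)=1$. Writing $p:=|E|$, $B_E:=\prod_{i\in E}[a_i,b_i]$ and $B_N:=\prod_{i\notin E}[a_i,b_i]$, the exit set $S_0^-$ — the union of the exit faces — is homeomorphic to $(\partial B_E)\times B_N$ with $\partial B_E\cong S^{p-1}$; this is a compact ANR and deformation retracts onto $S^{p-1}$ since $B_N$ is contractible. Hence $\chi(S_0^-)=\chi(S^{p-1})=1+(-1)^{p-1}$ for $p\geq 1$, while $\chi(S_0^-)=0$ in the degenerate case $E=\emptyset$. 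In every case
\[
  \chi(S_0)-\chi(S_0^-)=(-1)^{p}\neq 0,
\]
so Theorem~\ref{thm:srzednicki} produces a point $x_0\in\inter S_0$ whose trajectory is $\tau$-periodic and remains in $\inter S_0$ for all $t$, which is the claim.

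I expect the main obstacle to be the Euler-characteristic bookkeeping rather than the dynamics: one must handle an arbitrary mixture of exit and entrance coordinates and confirm that $\chi(S_0)-\chi(S_0^-)$ never vanishes, treating in particular the purely attracting case $E=\emptyset$ (empty exit set, $r=0$) separately. By comparison, the constant-sign argument and the verification of (S1)/(S2) are routine once the connectedness of the faces is combined with~\eqref{eq:isolation} to rule out zeros of $g_i$.
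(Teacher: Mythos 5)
Your proof is correct and takes essentially the same route as the paper: realize $[0,\tau]\times S_0$ as a periodic isolating segment and apply Theorem~\ref{thm:srzednicki}, the key point being $\chi(S_0)-\chi(S_0^-)=(-1)^r\neq 0$, where $r$ is the number of exit coordinates. The only differences are cosmetic: the paper cuts the box out with the $n$ quadratics $L^i(x)=(x_i-a_i)(x_i-b_i)$, one per coordinate (so each exit coordinate contributes a pair of opposite faces to $S_0^-$), while you use $2n$ affine functions; and you spell out the connectedness argument for the constant signs and the Euler-characteristic computation, including the degenerate case $E=\emptyset$, which the paper asserts without detail.
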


\begin{proof}
  After rearranging the coordinates we can assume that
  \begin{equation}\label{eq:rearr}
    \begin{aligned}
      g_i(t,x) > 0, \ &\text{ for } x \in S_0:\ x_i \in \{ b_1, \dots, b_r, a_{r+1}, \dots, a_n \}\,, \\
      g_i(t,x) < 0, \ &\text{ for } x \in S_0:\ x_i \in \{ a_1, \dots, a_r, b_{r+1}, \dots, b_n \}\,.
    \end{aligned}
  \end{equation}
  for all $t \in [0,\tau]$ and some $r \in \{1,\dots,n\}$.

  Let $L^i(x) := (x_i - a_i)(x_i - b_i)$, $i = 1,\dots,n$. Then $S_0$ is given by the sequence $\{L^i\}$
  and 
  \begin{equation}
    \nabla L^i(x) = 2 x_i - a_i - b_i\,.
  \end{equation}
  Now conditions (S1) and (S2) immediately follow from~\eqref{eq:rearr}, hence $S=[0,\tau] \times S_0$ forms a periodic isolating segment.
  Since $S_0^-$ consists of $r$ opposite faces, we have $\chi(S_0)-\chi(S_0^-) = (-1)^r$ and the assertion of Theorem~\ref{thm:srzednicki} follows. 
\end{proof}
Later on we will refer to condition~\eqref{eq:isolation} as the \emph{isolation conditions} or the \emph{isolation inequalities}.

\section{Application to the Boussinesq PDE}\label{sec:boussinesq}

\subsection{The Boussinesq equation in the Fourier basis}

Our goal in this section is to express our PDE in coordinates suitable for
application of Corollary~\ref{cor:isegment} to the Galerkin projections.
For that purpose we express the problem in the Fourier basis and diagonalize its linear part.

By formally substituting $u(t,x)= \sum_{k \in \zz} u_{k}(t) e^{ikx}$ into the Boussinesq equation~\eqref{eq:bsq} we obtain
an infinite ladder of second order equations
\begin{equation}
  \ddot u_{k} = k^{2} ( \beta k^{2} - 1 ) u_{k} - \sigma k^2 \sum_{k_1 \in \zz} u_{k_1}u_{k-k_1} + f_{k}(t),\ k \in \zz\,.
\end{equation}
Since $u$ is real and even in $x$, we have $u_{k}=u_{-k}$ and $u_{k} \in \rr$ for all $k \in \zz$.
Moreover, from~\eqref{eq:av} we have $u_0=0$. After these substitutions and rewriting the system
as a first order system we obtain the following equations
\begin{equation}\label{eq:infiniteOdeWF}
  \begin{aligned}
    \dot u_{k} &= v_k\,, \\
    \dot v_k &= k^{2} ( \beta k^{2} - 1 ) u_{k} - 2 \sigma k^2 \sum_{k_1 \geq  1} u_{k_1+k}u_{k_1} - \sigma k^2 \sum^{k-1}_{k_1 = 1} u_{k_1}u_{k-k_1} + \epsilon f_{k}(t),\ k \in \nn^+\,.
  \end{aligned}
\end{equation}

As one can see, the linear part of~\eqref{eq:infiniteOdeWF} is already in a block-diagonal form. All we need is to diagonalize each of the blocks.
From now on we assume that $\beta>1$.
After a simple calculation we see that the eigenvalues of the linear part of~\eqref{eq:infiniteOdeWF}
are $\pm \sqrt{ k^2 (\beta k^2 - 1 ) }$ with eigenvectors $\left[1, \pm \sqrt{ k^2 (\beta k^2 - 1 ) } \right]^T$, respectively.
We introduce the variables $u_k^+$ and $u_k^-$ such that
\begin{equation}\label{eq:changeVar}
  \left[ \begin{array}{c} u_k \\ v_k \end{array} \right] =
    \left[ \begin{array}{cc} 1 & 1 \\ \sqrt{ k^2 (\beta k^2 - 1 ) } & -\sqrt{ k^2 (\beta k^2 - 1 ) } \end{array} \right] \left[ \begin{array}{c} u_k^+ \\ u_k^- \end{array} \right]\,.
\end{equation}
We have
\begin{equation}
  \left[ \begin{array}{c} u_k^+ \\ u_k^- \end{array} \right] =
    \left[ \begin{array}{cc} \frac{1}{2} & \frac{1}{2 \sqrt{ k^2 (\beta k^2 - 1 ) }} \\ \frac{1}{2} & -\frac{1}{2\sqrt{ k^2 (\beta k^2 - 1 ) }} \end{array} \right]
      \left[ \begin{array}{c} u_k \\ v_k \end{array} \right]\,,
\end{equation}
and our equations become
\begin{equation}\label{eq:diag}
  \begin{aligned}
    \dot u_k^+ &= \sqrt{ k^2 (\beta k^2 - 1 ) }u_k^+  + \frac{ \sigma k^2 N_k(u) + \epsilon f_k(t)}{2 \sqrt{ k^2 (\beta k^2 - 1 ) }}, \\
    \dot u_k^- &= -\sqrt{ k^2 (\beta k^2 - 1 ) }u_k^- - \frac{ \sigma k^2 N_k(u) + \epsilon f_k(t)}{2 \sqrt{ k^2 (\beta k^2 - 1 ) }}, \ k \in \nn^+ \,,
  \end{aligned}
\end{equation}
where
\begin{equation}
  N_{k}(u) := -2 \sum_{k_1 \geq  1} u_{k_1+k}u_{k_1} - \sum^{k-1}_{k_1 = 1} u_{k_1}u_{k-k_1}\,.
\end{equation}
We remark that it is unprofitable to rewrite the convolutions in the new variables as we will eventually estimate these terms.

The $n$-th Galerkin projection of~\eqref{eq:diag} is given by
\begin{equation}\label{eq:diagP}
  \begin{aligned}
    \dot u_k^+ &= \sqrt{ k^2 (\beta k^2 - 1 ) }u_k^+  + \frac{ \sigma k^2 N_{k,n}(u) + \epsilon f_k(t)}{2 \sqrt{ k^2 (\beta k^2 - 1 ) }} \,, \\
    \dot u_k^- &= -\sqrt{ k^2 (\beta k^2 - 1 ) }u_k^- - \frac{ \sigma k^2 N_{k,n}(u) + \epsilon f_k(t)}{2 \sqrt{ k^2 (\beta k^2 - 1 ) }},\ k = 1,\dots,n \,,
  \end{aligned}
\end{equation}
where
\begin{equation}
  N_{k,n}(u):=-2 \sum_{k_1 \geq  1}^{n-k} u_{k_1+k}u_{k_1} - \sum^{k-1}_{k_1 = 1} u_{k_1}u_{k-k_1}\,.
\end{equation}
Our next step is to construct a sequence of isolating segments $S^n$ for the Galerkin projections~\eqref{eq:diagP}.

\subsection{Construction of periodic isolating segments}

We look for periodic isolating segments $S^n$ of the form $S^n = [0,\tau] \times S_0^n$,
where
\begin{equation}\label{eq:s0n}
  S_0^n = \prod_{k=1}^{M} [u_k^l,u_k^r]^2 \oplus \prod_{k=M+1}^{n} [-C/k^s,C/k^s]^2 \,,
\end{equation}
i.e. the set of n-tuples of pairs $(u^+,u^-) = \{(u_k^-,u_k^+)\}_{k=1}^{n}$ 
such that $u_k^-, u_k^+ \in [u_k^l, u_k^r]$ for $k \in 1,\dots,M$ and $\left|u_k^\pm\right| \leq C/k^s$.
For now it is enough to take $C \in \rr^+$ and $s \in \{2,3,\dots\}$, however later on we will assume that $s$ is at least 6,
to comply with condition \textbf{C3} from the definition of self-consistent bounds.

Observe that we would like to choose the values of $C$ and $s$, as well as the first $M$ intervals the same for each projection.
Therefore we can say that our segments are a projection of an ``infinite-dimensional segment'' given by
\begin{equation}\label{eq:s0infty}
  S_0^\infty := \prod_{k=1}^{M} [u_k^l,u_k^r]^2 \oplus \prod_{k=M+1}^{\infty} [-C/k^s,C/k^s]^2 \,.
\end{equation}
The elements of $S_0^\infty$ are sequences of pairs $(u^+,u^-) = \{(u_k^-,u_k^+)\}_{k=1}^{\infty}$.
We denote them by the same symbols as elements of $S_0^n$ but we will always make it clear to element of which set we are referring to.

We would like to choose $u_k^l$, $u_k^r$, $C$ and $s$ such that the linear part of~\eqref{eq:diagP} dominates the nonlinear terms
and the isolation conditions~\eqref{eq:isolation} hold -- at least for sufficiently high modes, for $n$ large enough.
The inequalities for the low modes we will treat one-by-one with aid of rigorous numerics.

We assume the bounds for $u_k^+$ to be the same as the ones for $u_k^-$. As we will
see later, due to the symmetry of the equations~\eqref{eq:diagP}, the isolation conditions for both $u_k^+$ and $u_k^-$ are given by the same inequalities.

From estimates in \cite{ZgliczynskiMischaikow}\footnote{We note that the estimates
from \cite{Zgliczynski3} improve the bound on~\eqref{eq:nonest} to $\tilde{D}/k^{s}$ for some $\tilde{D}$,
but the one we use here is fine enough for our applications.} it follows that,
for a set $S_0^\infty$ of the form given above, there exists a constant $D \in \rr^+$
such that
\begin{equation}\label{eq:nonest}
  \sup\left\{ |N_{k,n}(u)|:\  n \geq k > M,\ (u^+,u^-) \in S_0^n \right\} < \frac{D}{k^{s-1}}\,.
\end{equation}
The value of $D$ can be given by an explicit formula, but we postpone its evaluation to Subsection~\ref{subsec:nonlinearity}.

\begin{lem}\label{lem:highmode}
  Assume that for some $M \in \nn^+$, we have $f_k=0, \ k > M$ and
  \begin{equation}\label{eq:minC}
    C > \frac{\sigma D}{M+1} \cdot \frac{1}{2 (\beta - (M+1)^{-2})}\,.
  \end{equation}
  Consider the Galerkin projection~\eqref{eq:diagP} for $n \geq M$ and let $S_0^n$ be given by~\eqref{eq:s0n}.
  Then, for $M<k \leq n$ and $(u^+,u^-) \in S_0^n$ the following inequalities hold
  \begin{align}
    \dot{u}_k^+ &> 0 \quad \text{if} \quad u_k^+ = C/k^s \,,\label{eq:i1}\\
    \dot{u}_k^+ &< 0 \quad \text{if} \quad u_k^+ = -C/k^s \,,\label{eq:i2}\\
    \dot{u}_k^- &< 0 \quad \text{if} \quad u_k^- = C/k^s \,,\label{eq:i3}\\
    \dot{u}_k^- &> 0 \quad \text{if} \quad u_k^- = -C/k^s \,.\label{eq:i4}
  \end{align}
\end{lem}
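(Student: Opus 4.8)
The goal is to verify the four isolation inequalities \eqref{eq:i1}--\eqref{eq:i4} for the high modes $M < k \leq n$ on the boundary faces of the segment $S_0^n$. The plan is to look directly at the diagonalized equations \eqref{eq:diagP} and show that on the relevant face the linear term dominates the nonlinearity and forcing, fixing the sign of the derivative. Since $f_k = 0$ for $k > M$ by hypothesis, the forcing drops out entirely in the range under consideration, so I only need to balance the linear term $\sqrt{k^2(\beta k^2 - 1)}\, u_k^+$ against the nonlinear contribution $\frac{\sigma k^2 N_{k,n}(u)}{2\sqrt{k^2(\beta k^2 - 1)}}$.

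First I would treat \eqref{eq:i1}. On the face $u_k^+ = C/k^s$ the linear term equals $\sqrt{k^2(\beta k^2 - 1)}\, C/k^s > 0$, and its magnitude is bounded below using $\sqrt{k^2(\beta k^2 - 1)} = k^2\sqrt{\beta - k^{-2}} \geq k^2\sqrt{\beta - (M+1)^{-2}}$ for $k > M$. For the nonlinear term I would invoke the estimate \eqref{eq:nonest}, giving $|N_{k,n}(u)| < D/k^{s-1}$, so that the nonlinear contribution is bounded in absolute value by
\begin{equation}
  \frac{\sigma k^2}{2\sqrt{k^2(\beta k^2-1)}} \cdot \frac{D}{k^{s-1}}
    = \frac{\sigma D}{2 k^{s-1}\sqrt{\beta - k^{-2}}}\,. \notag
\end{equation}
Thus $\dot u_k^+ > 0$ will follow once the linear lower bound exceeds this, i.e. once
\begin{equation}
  k^2\sqrt{\beta - (M+1)^{-2}}\cdot \frac{C}{k^s}
    > \frac{\sigma D}{2 k^{s-1}\sqrt{\beta - k^{-2}}}\,. \notag
\end{equation}
Simplifying and using $k \geq M+1$ together with $\sqrt{\beta - k^{-2}} \geq \sqrt{\beta-(M+1)^{-2}}$ reduces this to a $k$-independent condition on $C$, which is exactly the hypothesis \eqref{eq:minC}; the factor $1/(M+1)$ there comes from the surplus power of $k$ between the linear and nonlinear terms. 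I would verify that the worst case is $k = M+1$, so that \eqref{eq:minC} suffices uniformly in $k$ and $n$.

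The remaining three inequalities follow by symmetry. For \eqref{eq:i2}, on the face $u_k^+ = -C/k^s$ the linear term flips sign to negative while the nonlinear bound is unchanged, so the same magnitude comparison yields $\dot u_k^+ < 0$. For the $u_k^-$ equation, the linear coefficient is $-\sqrt{k^2(\beta k^2-1)}$ and the nonlinear term appears with the opposite sign, so on $u_k^- = C/k^s$ the linear term is negative and dominates, giving \eqref{eq:i3}; likewise \eqref{eq:i4} on $u_k^- = -C/k^s$. This is the symmetry between the $u_k^+$ and $u_k^-$ equations noted before the lemma: replacing $(u_k^+, +)$ by $(u_k^-, -)$ maps one pair of inequalities onto the other.

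The main obstacle is ensuring that the single inequality \eqref{eq:minC}, evaluated essentially at the lowest relevant mode $k = M+1$, is genuinely uniform across all $M < k \leq n$ and all Galerkin levels $n$. The key is that the nonlinear estimate \eqref{eq:nonest} decays one power of $k$ faster than the linear term, so the ratio of nonlinear to linear contribution is largest at the smallest admissible $k$; I would make this monotonicity explicit so that controlling $k = M+1$ controls the whole tail. The uniformity in $n$ is automatic since the bound \eqref{eq:nonest} already holds for all $n \geq k$, and the constant $D$ does not depend on $n$.
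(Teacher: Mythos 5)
Your proposal is correct and follows essentially the same route as the paper's proof: drop the forcing since $f_k=0$ for $k>M$, bound the nonlinearity by the estimate~\eqref{eq:nonest}, show the linear term dominates with worst case at $k=M+1$ (which is exactly~\eqref{eq:minC}), and dispatch the remaining inequalities by the sign symmetry of the $u_k^+$ and $u_k^-$ equations. The only cosmetic difference is that the paper first divides the inequality by $\sqrt{k^2(\beta k^2-1)}$, which removes the square roots and makes \eqref{eq:i1} and \eqref{eq:i3} literally the same inequality, whereas you compare magnitudes directly.
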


\begin{proof}
  We will prove~\eqref{eq:i1} and~\eqref{eq:i3}. The proof of~\eqref{eq:i2} and~\eqref{eq:i4}
  follows by reversing the inequality signs. We want 
  \begin{equation}
     \sqrt{ k^2 (\beta k^2 - 1 ) }u_k^\pm  + \frac{ \sigma k^2 N_{k,n}(u)}{2 \sqrt{ k^2 (\beta k^2 - 1 ) }} > 0\,.
  \end{equation}
  Since $u_k^\pm = C/k^s$, the above is equivalent to
  \begin{equation}
     \frac{C}{k^s} + \frac{ \sigma N_{k,n}(u)}{2 (\beta k^2 - 1 ) } > 0 \,.
  \end{equation}
  By the estimate~\eqref{eq:nonest} it is enough that
  \begin{equation}\label{eq:CIsol}
    C > \frac{ \sigma D k}{2 (\beta k^2 - 1 ) } = \frac{ \sigma D }{2 k (\beta - k^{-2} ) }
  \end{equation}
  and the right-hand side is at most $\frac{\sigma D}{M+1} \cdot \frac{1}{2 (\beta - (M+1)^{-2})}$.
\end{proof}

\subsection{Estimates for the nonlinear terms}\label{subsec:nonlinearity}

In this subsection we provide bounds for the nonlinear terms and compute $D$.
In fact we will look for an estimate
\begin{equation}
  N_{k,\max} < \frac{D}{k^{s-1}}, \quad k > M
\end{equation}
for
\begin{equation}
  N_{k,\max}:=\sup\{ |N_k(u)|: (u^+,u^-) \in S_0^\infty\}\,,
\end{equation}
as it is an upper bound on the left-hand side of~\eqref{eq:nonest} for all $n,k: n \geq k$.

Let $S_0^\infty$ be of the form as in~\eqref{eq:s0infty} and $(u^+,u^-) \in S_0^\infty$.
Recall that $u_k = u_k^+ + u_k^-$, hence
\begin{equation}
  \begin{aligned}
    u_k \in [ 2u_k^l, 2u_k^r ]\quad &\text{for} \quad k \leq M\,, \\
    |u_k| \leq \frac{2C}{k^s}\quad &\text{for}\quad k > M\,.
  \end{aligned}
\end{equation}

The nonlinearity consists of terms given by an infinite sum and a finite sum
\begin{equation}
  N_k(u) = -2IS(k)-FS(k)\,,
\end{equation}
where
\begin{align}
  IS(k)&:=\sum_{k_1 \geq  1} u_{k_1+k}u_{k_1} \,,\\
  FS(k)&:=\sum_{k_1=1}^{k-1}u_{k_1}u_{k-k_1} \,.
\end{align}
These terms, arising from the nonlinearity in the Kuramoto-Sivasinsky equation, were estimated in~\cite{ZgliczynskiMischaikow}
(cf. also \cite{Zgliczynski3}, Section 8).
Throughout the rest of this subsection we will denote by $u_k$ the whole interval $[ 2u_k^l, 2u_k^r ]$
and put $|u_k|:= 2 \max\{|u_k^l|, |u_k^r| \}$.

\begin{lem}[Lemma 3.1 in \cite{ZgliczynskiMischaikow}]\label{lem:31}
For $k \in \{1,\dots, M\}$ we have
\begin{equation}
  \begin{aligned}
    IS(k) \subset &\sum_{k_1=1}^{M-k} u_{k_1+k} u_{k_1} + 2C \sum_{k_1=M-k+1}^{M} \frac{|u_k|}{(k+k_1)^{s}}[-1,1]\\ &+ \frac{4C^2}{(k+M+1)^s (s-1)M^{s-1}}[-1,1]\,.
  \end{aligned}
\end{equation}
\end{lem}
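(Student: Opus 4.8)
The plan is to prove the set containment by splitting the infinite series $IS(k) = \sum_{k_1 \geq 1} u_{k_1+k} u_{k_1}$ according to where the two indices $k_1$ and $k_1+k$ sit relative to the truncation level $M$, keeping the genuinely low-mode contribution exact and enclosing the partly or fully high-mode contributions in symmetric intervals. Since each $u_j$ is interval-valued, all operations here are understood in the sense of interval (set) arithmetic, which is why the statement asserts ``$\subset$'' rather than equality. Concretely, I would partition $\{k_1 \geq 1\}$ into the three ranges (i) $1 \leq k_1 \leq M-k$, where both $k_1 \leq M$ and $k_1 + k \leq M$; (ii) $M-k+1 \leq k_1 \leq M$, where $k_1 \leq M$ but $k_1 + k \geq M+1$; and (iii) $k_1 \geq M+1$, where both $k_1$ and $k_1+k$ exceed $M$. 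Note that range (i) is empty when $k = M$, which is harmless, and the hypothesis $k \leq M$ guarantees the ranges are correctly populated.

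Range (i) consists of products of two low-mode intervals and is retained verbatim as $\sum_{k_1=1}^{M-k} u_{k_1+k} u_{k_1}$, giving the first term. For range (ii), in each product $u_{k_1+k} u_{k_1}$ the factor $u_{k_1+k}$ is a high mode, so $|u_{k_1+k}| \leq 2C/(k_1+k)^{s}$, while $u_{k_1}$ is a low-mode interval bounded in magnitude by its width; enclosing each such product in a symmetric interval and summing over $M-k+1 \leq k_1 \leq M$ produces exactly the middle term of the claim. For range (iii) both factors are high modes, so $|u_{k_1} u_{k_1+k}| \leq \frac{2C}{k_1^{s}} \cdot \frac{2C}{(k_1+k)^{s}} = \frac{4C^{2}}{k_1^{s}(k_1+k)^{s}}$, and it remains to bound $\sum_{k_1 \geq M+1} \frac{4C^{2}}{k_1^{s}(k_1+k)^{s}}$ by the last term.

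For that tail I would use the uniform lower bound $(k_1+k)^{s} \geq (M+1+k)^{s}$, valid for every $k_1 \geq M+1$, to factor $(k+M+1)^{-s}$ out of the sum, leaving the residual series $\sum_{k_1 \geq M+1} k_1^{-s}$, which I bound by the integral comparison $\sum_{k_1 \geq M+1} k_1^{-s} \leq \int_M^{\infty} x^{-s}\,dx = \frac{1}{(s-1)M^{s-1}}$. This yields the final term $\frac{4C^{2}}{(k+M+1)^{s}(s-1)M^{s-1}}[-1,1]$, and adding the three enclosures gives the asserted containment. The bulk of the argument is routine interval arithmetic; the step requiring the most care is the range-(iii) tail estimate, since factoring out $(k+M+1)^{-s}$ must be justified uniformly over all $k_1 > M$ and the comparison with the integral is precisely where the convergence underpinning condition \textbf{C3} enters — so I expect this to be the only genuinely delicate point rather than an obstacle of substance.
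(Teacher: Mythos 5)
Your proof is correct and follows essentially the same three-way splitting (both indices low, one index high, both indices high) together with the integral tail comparison that constitutes the proof of this lemma in the cited Zgliczy\'nski--Mischaikow reference; the present paper only quotes the statement without reproducing that proof. One remark: your range-(ii) derivation yields $|u_{k_1}|$ in the middle term, which is in fact the correct form --- the $|u_k|$ appearing in the paper's display is a transcription typo, since $k$ is fixed and the low-mode factor being bounded is $u_{k_1}$.
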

\begin{lem}[Lemma 3.5 in \cite{ZgliczynskiMischaikow}]\label{lem:35}
For $k>2M$ we have
\begin{equation}
  FS(k) \subset \frac{2C}{k^{s-1}} \left( \frac{2^{s+1}}{2M+1} \sum_{k_1=1}^{M} |u_{k}| + \frac{C 2^{2s+1}}{(2M+1)^{s+1}} + \frac{C 2^{s+1}}{(s-1)M^s} \right)[-1,1]\,.
\end{equation}
\end{lem}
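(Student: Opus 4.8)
The plan is to estimate the finite convolution termwise, exploiting that for $k>2M$ the two indices in each product $u_{k_1}u_{k-k_1}$ are never simultaneously $\le M$, so that in every summand at least one factor is a genuine tail mode controlled by $|u_j|\le 2C/j^s$. First I would record the two pieces of data I may use: for $k_1\le M$ the quantity $|u_{k_1}|=2\max\{|u_{k_1}^l|,|u_{k_1}^r|\}$ is a fixed constant, while for $k_1>M$ one has $|u_{k_1}|\le 2C/k_1^s$. Then, since $k>2M$ forces $k_1$ and $k-k_1$ not to both lie in $\{1,\dots,M\}$, I would split $\sum_{k_1=1}^{k-1}$ into the head--tail indices (exactly one of $k_1,k-k_1$ at most $M$) and the tail--tail indices (both exceeding $M$), and use the symmetry $k_1\leftrightarrow k-k_1$ to reduce each part to $k_1\le k/2$ and double.

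For the head--tail part I would use that $k_1\le M<k/2$ implies $k-k_1>k/2$, hence $(k-k_1)^{-s}<2^s k^{-s}$ and $|u_{k_1}u_{k-k_1}|\le 2^{s+1}C k^{-s}|u_{k_1}|$. Summing over $k_1\le M$, doubling, and finally replacing $k^{-s}$ by $\big((2M+1)k^{s-1}\big)^{-1}$ via $k\ge 2M+1$ produces the first term $\tfrac{2C}{k^{s-1}}\cdot\tfrac{2^{s+1}}{2M+1}\sum_{k_1=1}^{M}|u_{k_1}|$.

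For the tail--tail part both factors obey the tail bound, so the task reduces to estimating $\sum_{k_1=M+1}^{k-M-1}k_1^{-s}(k-k_1)^{-s}$. Again bounding the larger-index factor by $2^s/k^s$ on the half-range $k_1\le k/2$ pulls out a clean $k^{-s}$ and leaves the residual series $\sum_{k_1>M}k_1^{-s}$, which I would control in two complementary ways: isolating the leading near-boundary contribution, which after the $k\ge 2M+1$ conversion yields the $(2M+1)^{-(s+1)}$ term, and comparing the remainder with $\int_M^\infty x^{-s}\,dx=((s-1)M^{s-1})^{-1}$, which after the same conversion absorbs one further power of $M$ and yields the $M^{-s}$ term. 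Collecting the three contributions under the common prefactor $\tfrac{2C}{k^{s-1}}$ gives the claimed inclusion.

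I expect the tail--tail estimate to be the real obstacle: obtaining merely the correct order $k^{-(s-1)}$ is easy, but reproducing the stated constants requires careful case-splitting at $k_1=k/2$, honest control of the number of summands, and the integral comparison. This is precisely the content of Lemma 3.5 of~\cite{ZgliczynskiMischaikow}, whose bookkeeping I would follow, adapting it to the present interval notation for $u_k$; the head--tail term and the trivial observation that the two indices cannot both be low are the only ingredients genuinely forced by the restriction $k>2M$.
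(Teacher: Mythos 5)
The paper itself offers no proof of this statement --- it is imported verbatim as Lemma~3.5 of~\cite{ZgliczynskiMischaikow} --- so the only benchmark is the original derivation, and your outline does follow it: the head--tail/tail--tail split (using that $k>2M$ forbids both indices being $\le M$), symmetry doubling onto the half-range $k_1\le k/2$, the bound $(k-k_1)^{-s}\le 2^s k^{-s}$ there, isolation of the $k_1=M+1$ term plus an integral comparison for the rest, and the final conversion $1/k\le 1/(2M+1)$. Your head--tail computation is exactly right: $2\sum_{k_1\le M}2^{s+1}Ck^{-s}|u_{k_1}|\le \frac{2C}{k^{s-1}}\cdot\frac{2^{s+1}}{2M+1}\sum_{k_1=1}^{M}|u_{k_1}|$, which also silently corrects the paper's typo ($|u_k|$ under the sum should read $|u_{k_1}|$).

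There is, however, a concrete constant gap in your tail--tail step that the plan as written does not close. With the paper's normalization the tail modes satisfy $|u_j|\le 2C/j^s$, so each tail--tail product carries $4C^2$; the near-boundary contribution (the $k_1=M+1$ term, doubled by your symmetry step) is then bounded by $2\cdot 4C^2\cdot\frac{2^s}{k^s}\cdot\frac{1}{(M+1)^s}\le \frac{8C^2\,2^{2s}}{(2M+1)^{s+1}k^{s-1}}=\frac{2C}{k^{s-1}}\cdot\frac{C\,2^{2s+2}}{(2M+1)^{s+1}}$, i.e.\ \emph{twice} the stated $\frac{C\,2^{2s+1}}{(2M+1)^{s+1}}$. (The integral piece does come out exactly: $8C^2\frac{2^s}{k^s}\cdot\frac{1}{(s-1)(M+1)^{s-1}}\le\frac{2C}{k^{s-1}}\cdot\frac{C\,2^{s+1}}{(s-1)M^s}$, using $(2M+1)(M+1)^{s-1}\ge 2M^s$.) The source of the mismatch is the transcription from~\cite{ZgliczynskiMischaikow}: there the tail bound is $C_{\mathrm{ZM}}/k^s$, and under the identification $C_{\mathrm{ZM}}=2C$ the occurrences of $C_{\mathrm{ZM}}$ \emph{inside} the bracket should also become $2C$; the paper doubled the prefactor but not the inner constants. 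So your argument honestly proves the inclusion with $2^{2s+2}$ in the middle term (which is all the paper actually needs, since the lemma only feeds the numerical constant $D_1$), but it does not prove the statement as literally written; recovering $2^{2s+1}$ would require extra slack your sketch does not provide (e.g.\ exploiting the double counting of the middle term, or restricting to $k\ge 4M+2$). You should flag this discrepancy rather than assert that the stated constants fall out of the bookkeeping.
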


\begin{lem}[Lemma 3.6 in \cite{ZgliczynskiMischaikow}]\label{lem:36}
For $k > M$ we have
\begin{equation}
  IS(k) \subset \frac{2C}{k^{s-1}(M+1)}\left( \frac{2C}{(M+1)^{s-1}(s-1)} + \sum_{k=1}^{M} |u_{k}| \right)[-1,1]\,.
\end{equation}
\end{lem}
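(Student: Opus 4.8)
Since this statement is quoted verbatim from \cite{ZgliczynskiMischaikow} (as its Lemma~3.6), the plan is to reconstruct its short argument in the present notation, bounding the infinite convolution $IS(k)=\sum_{k_1\ge1}u_{k_1+k}u_{k_1}$ mode by mode from the a~priori enclosures on the coordinates. The decisive structural observation is that for $k>M$ every shifted index satisfies $k_1+k>M$, so the first factor is \emph{always} a high mode and obeys $|u_{k_1+k}|\le 2C/(k_1+k)^s$; the second factor uses the finite-mode enclosure $|u_{k_1}|$ when $k_1\le M$ and the tail bound $|u_{k_1}|\le 2C/k_1^s$ when $k_1>M$. Because each $u_{k_1+k}$ ranges over a symmetric interval about~$0$, every product $u_{k_1+k}u_{k_1}$ lands in the symmetric interval $\tfrac{2C}{(k_1+k)^s}|u_{k_1}|\,[-1,1]$, and a sum of symmetric intervals is symmetric; this is exactly what produces the $(\,\cdot\,)[-1,1]$ shape of the conclusion, so it remains only to bound the radius.

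First I would split the sum at $k_1=M$ and treat the \emph{head} $\sum_{k_1=1}^{M}$ directly. Here the elementary inequality
\begin{equation}
  (k_1+k)^s=(k_1+k)^{s-1}(k_1+k)\ge k^{s-1}(M+1),
\end{equation}
valid because $k_1+k\ge k$ and $k\ge M+1$, lets me pull the common factor $\tfrac{2C}{k^{s-1}(M+1)}$ out of the head and reproduce the term $\tfrac{2C}{k^{s-1}(M+1)}\sum_{k_1=1}^{M}|u_{k_1}|$ of the statement verbatim. For the \emph{tail} $\sum_{k_1>M}$ both factors now decay, giving the double sum $\sum_{k_1>M}\tfrac{2C}{(k_1+k)^s}\cdot\tfrac{2C}{k_1^s}$. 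After extracting the same prefactor $\tfrac{2C}{k^{s-1}(M+1)}$, the remaining convergent series in $k_1$ is compared with an integral to yield the $C^2$ contribution $\tfrac{2C}{(M+1)^{s-1}(s-1)}$; convergence, and hence the very existence of the bound, is precisely what forces $s$ to be taken large, consistent with condition \textbf{C3} and the later choice $s\ge6$. Adding the head and the tail inside the common prefactor gives the asserted enclosure.

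I expect the genuine obstacle to be constant bookkeeping in the tail rather than anything conceptual: the crudest $p$-series comparison $\sum_{k_1>M}k_1^{-s}\le\int_M^\infty x^{-s}\,dx$ produces a power of $M$ rather than the sharper power of $M+1$ appearing in the statement, so one must retain part of the joint decay of the two factors $(k_1+k)^{-s}$ and $k_1^{-s}$ to land on the exact constant $\tfrac{1}{(M+1)^{s-1}(s-1)}$. This careful tracking is what is carried out in the cited Lemma~3.6 of \cite{ZgliczynskiMischaikow} (see also Section~8 of \cite{Zgliczynski3}); for the present paper it then suffices to verify that the Boussinesq enclosures of the form~\eqref{eq:s0infty} feed into that estimate unchanged.
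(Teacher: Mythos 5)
Your head estimate ($k_1\le M$) is correct and is the standard one: $(k_1+k)^s\ge k^{s-1}(k_1+k)\ge k^{s-1}(M+1)$ yields exactly the term $\tfrac{2C}{k^{s-1}(M+1)}\sum_{k_1=1}^{M}|u_{k_1}|$, and your symmetry argument for the $[-1,1]$ shape is fine. (Note the paper itself offers no proof of this lemma --- it is quoted from \cite{ZgliczynskiMischaikow} --- so the whole burden is on the reconstruction.) The genuine gap is in the tail. The decomposition you propose --- absorb the entire factor $|u_{k_1+k}|\le 2C/(k_1+k)^s\le 2C/\bigl(k^{s-1}(M+1)\bigr)$ into the prefactor and then integral-compare the leftover series $\sum_{k_1>M}k_1^{-s}$ --- provably cannot reach the stated constant, because the integral test also gives a lower bound:
\begin{equation}
  \sum_{k_1=M+1}^{\infty}\frac{1}{k_1^{s}}\;>\;\int_{M+1}^{\infty}\frac{dx}{x^{s}}\;=\;\frac{1}{(s-1)(M+1)^{s-1}}\,,
\end{equation}
so after extracting $\tfrac{2C}{k^{s-1}(M+1)}$ no estimate of that leftover series can produce $\tfrac{2C}{(s-1)(M+1)^{s-1}}$; your route caps out at the strictly weaker $\tfrac{2C}{(s-1)M^{s-1}}$. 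You flag this yourself, but the phrase ``retain part of the joint decay'' followed by deferral to the citation is precisely the missing idea, not bookkeeping: partial retention of the joint decay (e.g.\ $(k_1+k)^s\ge k^{s-1}(k_1+M+1)$, then telescoping or integral comparison from $M$) still loses either the factor $\tfrac{1}{s-1}$ or the power $(M+1)^{s-1}$.

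The correct step swaps the roles of the two factors. For $k_1>M$ bound the unshifted factor by a constant, $|u_{k_1}|\le 2C/k_1^{s}\le 2C/(M+1)^{s}$, and apply the integral comparison to the shifted factor, whose indices start at $k+M+1$:
\begin{equation}
  \sum_{k_1=M+1}^{\infty}|u_{k_1+k}|\;\le\;2C\sum_{j=k+M+1}^{\infty}\frac{1}{j^{s}}
  \;\le\;2C\int_{k+M}^{\infty}\frac{dx}{x^{s}}\;=\;\frac{2C}{(s-1)(k+M)^{s-1}}\;\le\;\frac{2C}{(s-1)\,k^{s-1}}\,.
\end{equation}
Multiplying the two bounds encloses the tail in
\begin{equation}
  \frac{2C}{(M+1)^{s}}\cdot\frac{2C}{(s-1)\,k^{s-1}}\,[-1,1]
  \;=\;\frac{2C}{k^{s-1}(M+1)}\cdot\frac{2C}{(M+1)^{s-1}(s-1)}\,[-1,1]\,,
\end{equation}
which is exactly the first term of the lemma; adding your head term finishes the proof. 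Thus the shifted factor $(k_1+k)^{-s}$ supplies both the $k^{-(s-1)}$ decay and the $\tfrac{1}{s-1}$, while $(M+1)^{-s}$ comes from the sup bound on $|u_{k_1}|$ --- the opposite division of labor from the one you set up. A side remark: convergence here needs only $s>1$; the paper's requirement $s\ge 6$ comes from condition \textbf{C3} for the Boussinesq nonlinearity (the $k^2$ factors in the vector field), not from this lemma.
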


Following~\cite{Zgliczynski3} we give $D_{1}, D_{2}$ such that
\begin{equation}
  |FS(k)| \leq \frac{D_{1}}{k^{s-1}}, \quad |IS(k)| \leq \frac{D_{2}}{k^{s-1}}, \quad k>M
\end{equation}
and then set $D:=D_{1}+2D_{2}$.
Using Lemmas~\ref{lem:35} and~\ref{lem:36} we get the following formulas for $D_1$, $D_2$:
\begin{align}
  D_{1}(k\leq 2M) &= \max \{ k^{s-1} |FS(k)|,\ M < k \leq 2M \} \,, \\
  D_{1}(k > 2M) &= 2C \left( \frac{2^{s+1}}{2M+1} \sum_{k_1=1}^{M} |u_{k}| + \frac{C 2^{2s+1}}{(2M+1)^{s+1}} + \frac{C 2^{s+1}}{(s-1)M^s} \right) \,, \\
  D_{1} &= \max ( D_1 (k \leq 2M), D_1 (k > 2M) ) \,, \\
  D_{2} &=  \frac{2C}{M+1}\left( \frac{2C}{(M+1)^{s-1}(s-1)} + \sum_{k=1}^{M} |u_{k}| \right) \,.
\end{align}

\subsection{Low mode isolation and a procedure for refining the bounds}\label{subsec:refinement}

We will now discuss the low mode isolation inequalities. Assume,
that we found $M \in \nn$ and segments $S_0^n,\ n> M$ such that the assumptions of Lemma~\ref{lem:highmode}
hold. To apply Corollary~\ref{cor:isegment} to all Galerkin projections it is now enough to check
\begin{align}
  \dot{u}_k^+ &> 0 \quad \text{if} \quad u_k^+ = u_k^r \,,\label{eq:li1}\\
  \dot{u}_k^+ &< 0 \quad \text{if} \quad u_k^+ = u_k^l \,,\label{eq:li2}\\
  \dot{u}_k^- &< 0 \quad \text{if} \quad u_k^- = u_k^r \,,\label{eq:li3}\\
  \dot{u}_k^- &> 0 \quad \text{if} \quad u_k^- = u_k^l\label{eq:li4}\,.
\end{align}
for $(u^-,u^+) \in S_0^n$ and $n > M$.
It is enough to verify
\begin{equation}\label{eq:lowmodes}
  \begin{aligned}
    u_k^r  > -\frac{ \sigma k^2 N_{k}(u) + \epsilon f_k(t)}{2 k^2 (\beta k^2 - 1 ) }\,, \\
    u_k^l  < -\frac{ \sigma k^2 N_{k}(u) + \epsilon f_k(t)}{2 k^2 (\beta k^2 - 1 ) }\,.
  \end{aligned}
\end{equation}
for all $u \in S_0^\infty$, $t \in [0,\tau]$ and $k=1,\dots,M$.
Recall that $N_k(u) = -2 IS(k) - FS(k)$.
The term $IS(k)$ is bounded by use of Lemma~\ref{lem:31}, while $FS(k)$ are finite sums which can be for example rigorously enclosed
by use of interval arithmetics.
Therefore we can compute an explicit bound
\begin{equation}
  \sigma N_k(u) \subset [N_k^l, N_k^r], \quad u \in S_0^\infty 
\end{equation}
for each $k=1,\dots,M$. Assume that we are also have some bounds
\begin{equation}
  \epsilon f_k(t) \subset [f_k^l, f_k^r], \quad t \in \rr \,.
\end{equation}
Given these enclosures, inequalities~\eqref{eq:lowmodes} can be checked easily, in our case on a computer using interval arithmetics.
Note that there is no guarantee that for given $\epsilon \neq 0$ and given bounds $S_0^\infty$ the inequalities~\eqref{eq:lowmodes} will be satisfied.
However, for $|\epsilon|$ small enough ``good'' bounds should exist.
Since we cannot expect to choose the correct values for $\{u_k^{l,r}\}$, $C$ and $s$ at first try,
we will use an algorithm from~\cite{ZgliczynskiMischaikow} (Section 3.3) for refining an initial guess for the bounds.

Our goal is both to increase $s$ and correct our guesses for the bounds on coordinates where the isolation inequalities do not hold.
We iteratively adjust the pairs $(u_k^l,u_k^r)$, and later $C$ and $s$, so
at each step our new guess is at worst case an equality in the isolation conditions.
This way the bounds are tight on each coordinate, so the nonlinear terms do not contribute much error.
Note that the procedure is heuristic and we do not claim that the algorithm will produce correct bounds
-- this we verify a posteriori in interval arithmetics.

\begin{enumerate}
  \item First we adjust $C$ and $s$. Recall that, by~\eqref{eq:CIsol} for $k>M$ we want to choose $C$ and $s$
    such that
    \begin{equation}
      \frac{C}{k^s} > \frac{ \sigma D }{2 k^{s+1} (\beta - k^{-2} ) } \,.
    \end{equation}
    We want to increase $s$ - therefore, we set the new parameters by
    \begin{equation}
      \begin{aligned}
        s&:=s+1\,,\\
        C&:= \frac{\sigma D}{2 (\beta - (M+1)^{-2})}\,.
      \end{aligned}
    \end{equation}

  \item  Trying to comply with~\eqref{eq:lowmodes} we set the new $u_k^l$'s and $u_k^r$'s inductively for $k=1,\dots,M$ by
    \begin{equation}
      \begin{aligned}
        u_k^r&:= -\frac{ N_{k}^{l} +  k^{-2}f_k^l}{2(\beta k^2 - 1 ) } \,, \\
        u_k^l&:=-\frac{ N_{k}^{r} +  k^{-2}f_k^r}{2(\beta k^2 - 1 ) }\,.
      \end{aligned}
    \end{equation}
\end{enumerate}
After each run we check inequalities~\eqref{eq:lowmodes} and~\eqref{eq:minC} to see whether we obtained isolation.
We may however require additional iterates to improve $s$.

\subsection{Proof of Theorem~\ref{thm:main}}

For each of the parameter values and forcing terms we take a guess on the initial bounds with higher modes decay of order $\tilde{C}/k^4$.
Once a guess for some given range of $\epsilon$ is found it is easy to adapt it to another $\epsilon$ by rescaling proportionally
to the upper bound on $|\epsilon|$. After two iterates of procedure given in Subsection~\ref{subsec:refinement} we obtain
the values of $\{u_k^{l,r}\}$, $C$ and $s=6$ such that inequalities~\eqref{eq:lowmodes} and~\eqref{eq:minC} hold.
We present the approximate values (the first 5 significant digits of the actual values) in Table~\ref{tab3}.
We remark that in all of the cases it was enough to take $M=6$.
From Corollary~\eqref{cor:isegment} we conclude that for each Galerkin projection~\eqref{eq:diagP} for $n>M$ there exists a periodic solution $(u^{+,n}(t),u^{-,n}(t))$
such that
\begin{equation}\label{eq:upmBounds}
  u^{\pm,n}(t) \in \prod_{k=1}^{M} [u_k^l,u_k^r] \oplus \prod_{k=M+1}^{n} [-C/k^s,C/k^s], \quad \forall t \in \rr \,.
\end{equation}

The above computations were done on a computer in interval arithmetics, as it would be tedious
to do them by hand. The source files are available online~\cite{Czechowski}. The program
uses interval arithmetics implementation from the CAPD package~\cite{CAPD}.

\begin{table}
  \begin{center}
    \caption{Parameters $M$ and $s$ and approximate values of $C$, $u_k^{l}$ and $u_k^r$, $k=1,\dots,M$ used in the proof of Theorem~\ref{thm:main}}
\scalebox{0.73}{
    \begin{tabular}{ | c | c | c | c | c | c | c |}
        \hline $f \in$ & \multicolumn{3}{c}{$\mathcal{F}_{\tau}^{A}$} & \multicolumn{3}{|c|}{$\mathcal{F}_{\tau}^{B}$} \\ \hline
               $\beta$ & $1.5$ & $1.75$ & $2.5$ & $1.5$ & $1.75$ & $2.5$ \\ \hline
               $\epsilon$ & $[-0.05,0.05]$ & $[-0.1,0.1]$ & $[-0.3,0.3]$ & $[-0.05,0.05]$ & $[-0.1,0.1]$ & $[-0.3,0.3]$ \\ \hline %\hhline{|=|=|=|=|=|=|=|}
               $M$ & $6$ & $6$ & $6$ & $6$ & $6$ & $6$ \\ \hline
        $u_1^r = -u_1^l$ & $0.05743$ & $0.082489$ & $0.16777$ & $0.059242$ & $0.085611$ & $0.17515$ \\  
$u_2^r = -u_2^l$ & $0.004018$ & $0.0069984$ & $0.020237$ & $0.0055667$ & $0.0097091$ & $0.026475$ \\  
$u_3^r = -u_3^l$ & $0.00022427$ & $0.0004798$ & $0.0019934$ & $0.00054628$ & $0.0010739$ & $0.0035043$ \\  
$u_4^r = -u_4^l$ & $1.1242 \times 10^{-5}$ & $2.9597 \times 10^{-5}$ & $0.00017727$ & $9.3307 \times 10^{-5}$ & $0.000179$ & $0.00055121$ \\  
$u_5^r = -u_5^l$ & $5.7862 \times 10^{-7}$ & $1.9415 \times 10^{-6}$ & $1.8646 \times 10^{-5}$ & $2.9174 \times 10^{-6}$ & $7.6705 \times 10^{-6}$ & $4.3434 \times 10^{-5}$ \\  
$u_6^r = -u_6^l$ & $5.5904 \times 10^{-7}$ & $1.9328 \times 10^{-6}$ & $2.1631 \times 10^{-5}$ & $7.7255 \times 10^{-7}$ & $2.596 \times 10^{-6}$ & $2.6004 \times 10^{-5}$ \\ \hline 
$C$ & $4.6941$ & $13.039$ & $100.64$ & $4.8878$ & $13.613$ & $102.99$ \\ \hline 

               $s$ & $6$ & $6$ & $6$ & $6$ & $6$ & $6$ \\ \hline
      \end{tabular} }
    \label{tab3}
  \end{center}
\end{table}

By the change of variables~\eqref{eq:changeVar}
we return now to the original coordinates $u_k,v_k$, i.e. the Fourier coefficients of $u$ and $u_t$ and obtain a sequence of periodic solutions
$(u^n(t),v^n(t))$ of the Galerkin projections of the system~\eqref{eq:infiniteOdeWF}.
From equation~\eqref{eq:upmBounds} and the form of our change of variables it follows, that there exists a $\hat{C} > 0$
(an exact value of which is not important to us), such that
\begin{equation}\label{eq:coeffuv}
  u^n_k(t) \leq \frac{\hat{C}}{k^6}, \qquad v^n_k(t) \leq \frac{\hat{C}}{k^{4}} \,.
\end{equation}
for all $n,k: k\leq n$ and $t \in \rr$.
A standard argument (cf. Theorem 10 in~\cite{Zgliczynski3}) proves that the set
\begin{equation}
  \prod_{k=1}^{\infty} [-\hat{C}/k^6, \hat{C}/k^6] \times [-\hat{C}/k^4, \hat{C}/k^4]
\end{equation}
satisfies conditions \textbf{C2}, \textbf{C3} and forms self-consistent bounds for~\eqref{eq:infiniteOdeWF}.
Note that at this moment we need the polynomial coefficient decay rate to be of order at least 2 for $v^n_k$'s and 6 for $u^n_k$'s (we have 4 and 6, respectively).
Let $u^n(t,x) = \sum_{k \in \nn} u_k(t) e^{ikx}$, $v^n(t,x) = \sum_{k \in \nn} v_k(t) e^{ikx}$.
From Theorem~\ref{thm:conv} it follows that the sequence $\{(u^n(t,x),v^n(t,x))\}_n$ has a subsequence $\{(u^{n_l}(t,x),v^{n_l}(t,x))\}_l$ 
converging uniformly on compact time intervals to a solution $(u^*(t,x),v^*(t,x))$ of
\begin{equation}\label{eq:firstorder}
  \begin{aligned}
    u_t &= v\,,\\
    v_t &= u_{xx} + \beta u_{xxxx} + \sigma (u^{2})_{xx} + \epsilon f(t,x)\,,
  \end{aligned}
\end{equation}
i.e. the Boussinesq equation~\eqref{eq:bsq} rewritten as a first order system. We have
\begin{equation}
  u^*(t,x) = \lim_{l \to \infty} u^{n_l}(t,x) = \lim_{l \to \infty} u^{n_l}(t+\tau,x) = u^*(t+\tau,x)
\end{equation}
for all $x,t \in \rr$, hence the solution is periodic.

The $C^0$ and $L^2$ bounds on $u^*$ and $v^*$ are computed from equations~\eqref{eq:changeVar} and~\eqref{eq:upmBounds}.
From the coefficient decay~\eqref{eq:coeffuv} and elementary facts about the Fourier series (see Section 6 in~\cite{ZgliczynskiNS}) it follows that $u^*(t,x)$ is
of class $C^4$ and $v^*(t,x)$ is of class $C^2$ as functions of $x$.

% ---------------------------------------- bibliography ---------------------------------------------------------------
\begin{small}
\bibliographystyle{abbrv}
\bibliography{bsq_bib}
\end{small}

% ----------------------------------------------------------------------------------------------------------------------

\end{document}